\newtheorem{theorem}{Theorem}
\newtheorem{definition}{Definition}
\newtheorem{proposition}{Proposition}
\newcommand{\diam}{\mbox{diam }}
\title[Doubling Spaces and the Benjamini-Schramm Lemma]{Doubling Metric Spaces are Characterized by a Lemma of Benjamini and Schramm}
\author[J. T.  Gill]{James T.  Gill}
\thanks{The author is supported by an NSF Mathematical Sciences Postdoctoral Research Fellowship DMS-1004721}
\address{Department of Mathematics and Computer Science,
Saint Louis University,
220 N. Grand Blvd.,
St. Louis, MO 63103}
\email{jgill5@slu.edu}
\date{October 31, 2012}
\begin{document}

\begin{abstract}
A useful property of $\mathbb{R}^n$ originally shown by I. Benjamini and O. Schramm  turns out to characterize doubling metric spaces
\end{abstract}

\subjclass[2010]{Primary 30L05, Secondary 20A75}
\keywords{doubling space, Assouad dimension}

\maketitle

\section{Introduction}

In \cite{BS} I. Benjamini and O. Schramm prove that the distributional limit of rooted random unbiased finite planar graphs with uniformly bounded degree is almost surely recurrent.  By rooted they meant that each graph had a distinguished vertex, and by unbiased they meant that (essentially) given an unrooted graph each vertex is equally likely to be the root.  The distributional limit is the weak limit of the random graphs with respect to the topology of local graph isomorphisms in neighborhoods of the root.  In this influential paper, they prove a curious lemma which we repeat as Lemma A below.

We use the common notation that in metric space $(X,d)$, the ball $B(x,r)$ consists of all points $y \in X$ such that $d(x,y) < r$.  When $K>0$ is positive, we use $KB$ to denote a ball of the same center as $B$, but with radius $K$ times the original.

\begin{definition} Let $w \in C$ be a finite set of points of the metric space $(X,d)$.  The {\em isolation radius} of $w$ is 
\[ \rho_w = \min \{ d(w,v) : v \in C \setminus{\{w\}} \}. \]
The point $w$ is said to be {\em $(\delta, s)$-supported} for some $\delta \in (0,1)$ and $s \geq 2$ if
\[ \inf_{p \in X} \left| \left\{ \left( B(w, \rho_w / \delta) \setminus B(p, \delta \rho_w) \right) \cap C \right\} \right| \geq s. \]
\end{definition}

\vspace{2mm}\noindent{\bf Lemma A.} (Benjamini and Schramm, \cite{BS} Lemma 2.3)
\textit{For every $\delta \in (0,1)$ there is a constant $c = c(\delta)$ such that for every finite $C \subset \mathbb{R}^2$ and every $s \geq 2$ the set of $(\delta, s)$-supported points in $C$ has cardinality at most $c|C|/s$.}\\

Their proof is valid for any $\mathbb{R}^n$ and uses both probability and scaling properties of $\mathbb{R}^n$.  This lemma is a key step in the proof of their recurrence result mentioned above.  It is also known that for $\mathbb{R}^n$ that $c(\delta)$ is of the order $\delta^{-n} \log(1/\delta)$ (see Lemma 3.4 of \cite{GGN}) .  The current author and S. Rohde used this lemma in the proof that Brownian motion on the uniform infinite planar triangulation is recurrent \cite{GR}.  In \cite{BC} I. Benjamini and N. Curien use this same lemma as part of the proof a result similar to that in \cite{BS} but in higher dimensions and more related to sphere packing than to random walks.  In \cite{NPS} H. Namazi, P. Pankka, and J. Souto found the lemma useful to show a certain non-degeneracy of the distributional limit of random Riemannian manifolds with a curvature bound.  Finally, in \cite{GGN} O. Gurel-Gurevich and A. Nachmias proved that the random walk on uniform infinite planar triangulation is recurrent with this now, almost ``magical'' lemma playing a key role.

The usefulness of Lemma A together with its strange opacity leads one to wonder in what settings besides $\mathbb{R}^n$ it might hold.  This thought naturally produces the following definition.

\begin{definition}
A metric space $(X,d)$ is said to {\em carry a Benjamini-Schramm lemma} if for all $\delta \in (0,1)$ there exists a constant $c = c(\delta)$ so that for every finite set $C$ in $X$ and every $s \geq 2$,  the number of $(\delta, s)$-supported points in $C$ is less than $c \cdot |C|/s$ (i.e. the proportion of points which are $(\delta, s)-supported$ is $c/s$ where the constant depends only on $delta$ and of course the underlying metric space).
\end{definition}

One might also think to allow the decay to depend on other functions of $s$, but the proof of Proposition \ref{conv} below shows this is unnecessary.  As Lemma A fails spectacularly in an infinite dimensional normed linear space, some concept of finite dimensionality of our space is needed.   The following definition is a form of finite dimensionality for metric spaces.

\begin{definition}
A metric space $(X,d)$ is called {\em doubling} if there is a constant $D \geq 1$ so that every set of diameter $d$ in $X$ can be covered by at most $D$ sets of diameter at most $d/2$.
\end{definition}
We note that there are many equivalent definitions of doubling (metric) spaces.  In particular, in this note we will force the arbitrary covering sets to be metric balls.  One may also choose any number $l > 1$ and demand that the covering balls be of diameter at most $d/l$.  Euclidean space is clearly doubling.  Hyperbolic space, however, is not.  A non-trivial example of a doubling metric space is the Heisenberg group endowed with the Carnot metric.  It is metric doubling, but not bi-Lipschitz equivalent to $\mathbb{R}^n$.  These particulars are discussed in the book \cite{H} (esp. Chapter 10), and more extensively in the paper \cite{L}.  A metric space which is doubling is often called a space of {\em finite Assouad dimension}.  It turns out that this is precisely the assumption needed for a Benjamini-Schramm lemma.

\begin{theorem} A metric space $(X,d)$ carries a Benjamini-Schramm lemma if and only if $(X,d)$ is doubling. \label{mt}
\end{theorem}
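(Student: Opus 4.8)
The plan is to prove the two implications separately. The reverse direction (a Benjamini--Schramm lemma forces doubling) is handled by an explicit construction, while the forward direction (doubling yields the lemma) adapts the packing/charging mechanism behind Lemma A.

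\medskip\noindent\textbf{If $X$ carries the lemma then $X$ is doubling.} I would argue the contrapositive. If $X$ is not doubling then, for every $n$, some set $E$ of diameter $R$ cannot be covered by $n$ balls of radius $R/4$; a maximal $(R/4)$-separated subset $P=\{q_1,\dots,q_n\}\subseteq E$ then has more than $n$ points (otherwise its radius-$R/4$ balls would cover $E$), with all pairwise distances in $[R/4,R]$. Fix $\delta=\tfrac{1}{16}$ once and for all and take $C=P$. Each isolation radius satisfies $\rho_q\in[R/4,R]$, so $B(q,\rho_q/\delta)=B(q,16\rho_q)$ has radius at least $4R$ and hence contains all of $C$, whereas any deletion ball $B(p,\delta\rho_q)$ has diameter at most $R/8$, smaller than the separation $R/4$, and so meets at most one point of $C$. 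Thus removing any single deletion ball leaves at least $n-1$ points, and every point of $C$ is $(\delta,n-1)$-supported. Taking $s=n-1$, the number of supported points is at least $n=|C|$, forcing $c(\delta)\ge n-1$; letting $n\to\infty$ contradicts the existence of a finite $c(\delta)$.

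\medskip\noindent\textbf{If $X$ is doubling then $X$ carries the lemma.} Here I would use the doubling packing estimate: a ball of radius $\rho$ contains at most $N(\lambda)\lesssim \lambda^{-\log_2 D}$ points that are pairwise $\lambda\rho$-separated, equivalently it contains at most that many disjoint balls of radius $\lambda\rho$. For a $(\delta,s)$-supported point $w$, nothing of $C$ lies in $B(w,\rho_w)$ except $w$, so every other witness lies in the annulus $\rho_w\le d(w,\cdot)<\rho_w/\delta$; letting $p_w$ attain the infimum in the definition, the survivor set $T_w:=\bigl(B(w,\rho_w/\delta)\setminus B(p_w,\delta\rho_w)\bigr)\cap C$ satisfies $|T_w|\ge s$. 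I charge each supported $w$ one unit to every point of $T_w$, so that $s|S|\le\sum_{w\in S}|T_w|=\sum_{v\in C}K_v$, where $K_v=\#\{w\in S: v\in T_w\}$, and it remains to show $K_v\le c(\delta)$. Fixing $v$, any contributing $w$ satisfies $\delta\, d(w,v)<\rho_w\le d(w,v)$. Grouping these $w$ by the dyadic scale of $\rho_w$, the isolation balls $B(w,\rho_w/2)$ are pairwise disjoint (since $d(w,w')\ge\max(\rho_w,\rho_{w'})$) and lie in a fixed dilate of $B(v,\rho_w/\delta)$, so packing bounds the count on each scale by $N'(\delta)\lesssim\delta^{-\log_2 D}$. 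The survivorship condition $v\in T_w$ (that $v$ escapes the most crowded deletion ball) is what eliminates the scales on which $v$ would be swallowed by a cluster, confining the contributing scales to a window of length $O(\log(1/\delta))$ and yielding $c(\delta)$ of order $\delta^{-\log_2 D}\log(1/\delta)$, consistent with the Euclidean rate recorded after Lemma A.

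\medskip\noindent The hard part is the last bound, the \emph{cross-scale overlap}: a single point $v$ can serve as a witness for supported points at many different scales. A naive charge to \emph{all} witnesses fails, since a small-scale point is a legitimate far witness for unboundedly many larger-scale supported points (a geometric progression on the line makes this explicit), so $\sum_{w}|B(w,\rho_w/\delta)\cap C|$ need not be comparable to $|C|$. Discarding the far witnesses by passing to the robust survivor set $T_w$, and showing that survivorship plus doubling confines the relevant scales to a logarithmic window, is the crux of the argument. I would emphasize that the \emph{spread} encoded in ``$(\delta,s)$-supported'' is indispensable here: the mere density condition $|B(w,\rho_w/\delta)\cap C|\ge s$ fails to give any bound of the form $c(\delta)|C|/s$, even on $\mathbb{R}$, so the robustness of the definition must be used in an essential way.
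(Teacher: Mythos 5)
Your necessity argument (no lemma when the space is not doubling) is sound and is essentially the paper's Proposition \ref{conv}: you use a maximal separated subset of a non-doublable set where the paper uses the $5r$-covering theorem, and your constants ($\delta=1/16$ versus $1/20$) differ, but the mechanism is identical. The sufficiency direction is where you depart from the paper and where the argument breaks. The paper never proves the doubling case directly: it takes Lemma A in $\mathbb{R}^n$ as a black box, shows that the $(\delta,s)$-supported condition is preserved under bi-Lipschitz maps (with $\delta$ worsened to $\delta/L^2$, Proposition \ref{bleq}) and under snowflaking (with $\delta$ replaced by $\delta^{1/\epsilon}$, Proposition \ref{snbsl}), and then invokes Assouad's embedding theorem to reduce an arbitrary doubling space to the Euclidean case. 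You instead attempt a from-scratch charging proof, which would in particular reprove Lemma A in greater generality; the crux you correctly identify --- bounding the overlap count $K_v=\#\{w\in S: v\in T_w\}$ --- is real, but your claimed resolution of it fails.

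Concretely, the pointwise bound $K_v\le c(\delta)$ is false, so survivorship does not confine the contributing scales to a window of length $O(\log(1/\delta))$. Take $v=0$ and, for $k=1,\dots,m$, a supported point $w_k$ with $d(w_k,0)\approx\rho_{w_k}\approx\delta^{-k}$, surrounded in its annulus by a spread-out cloud of $s$ points at mutual distances exceeding $2\delta\rho_{w_k}$ (so that $w_k$ is $(\delta,s-1)$-supported) together with a decoy cluster of diameter much smaller than $\delta\rho_{w_k}$ containing more points of $C$ than $B(0,\delta\rho_{w_k})$ does. Then the minimizing deletion ball for $w_k$ sits on the decoy rather than near $0$, so $0\in T_{w_k}$ for every $k$ and $K_0=m$ is unbounded with $\delta$ fixed. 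Your charging identity $s|S|\le\sum_{v\in C}K_v$ only requires an average bound on $K_v$ rather than a maximum bound, but you supply no mechanism for the average, and the original Benjamini--Schramm argument obtains one by probabilistic averaging over random centers and radii rather than by a deterministic packing count per scale. As written, the forward direction therefore has a genuine gap, and the paper's detour through Assouad's theorem is precisely the device that avoids having to close it.
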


\textit{Acknowledgements} The author would like to thank Steffen Rohde for encouraging work in this direction and the enthusiastic audience of the Saint Louis University analysis seminar for their questions and comments, especially Bryan Clair.

\section{Proof of Equivalence}
We prove Theorem \ref{mt} through a series of propositions. The following proposition is essentially part of the proof of Lemma 5.1 in \cite{NPS}.
\begin{proposition} \label{bleq}
If $(X,d)$ carries a bi-Lipschitz embedding into $\mathbb{R}^n$ then $(X,d)$ carries a Benjamini-Schramm lemma.
\end{proposition}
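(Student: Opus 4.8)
The plan is to transfer Lemma A, which already holds in $\mathbb{R}^n$, across the bi-Lipschitz embedding. Let $f \colon X \to \mathbb{R}^n$ satisfy $L^{-1} d(x,y) \le |f(x) - f(y)| \le L\, d(x,y)$ for some $L \ge 1$, and fix a finite set $C \subset X$ with image $f(C)$; since $f$ is injective, $|f(C)| = |C|$. The whole argument reduces to a single comparison claim: there is a $\delta' = \delta'(\delta, L) \in (0,1)$ so that whenever $w$ is $(\delta, s)$-supported in $C$, its image $f(w)$ is $(\delta', s)$-supported in $f(C)$. Granting this, Lemma A in $\mathbb{R}^n$ bounds the number of $(\delta', s)$-supported points of $f(C)$ by $c(\delta')|f(C)|/s$, and injectivity of $f$ turns distinct supported points of $C$ into distinct supported points of $f(C)$; setting $c(\delta) := c(\delta')$ then delivers the Benjamini-Schramm lemma for $X$.

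To set up the comparison I would first relate the isolation radii. Writing $\rho_w$ for the isolation radius of $w$ in $C$ and $\rho'$ for that of $f(w)$ in $f(C)$, taking the minimum over $v \neq w$ in the bi-Lipschitz inequality gives $\rho_w/L \le \rho' \le L\rho_w$. I would then set $\delta' = \delta/(2L^2)$, noting $\delta' \in (0,1)$. With this choice the large-ball inclusion is automatic: if $d(v,w) < \rho_w/\delta$ then $|f(v) - f(w)| < L\rho_w/\delta \le \rho'/\delta'$, so every point of $C$ inside $B(w, \rho_w/\delta)$ has its image inside $B(f(w), \rho'/\delta')$.

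The hard part will be the small ball, because the supporting condition for $f(w)$ takes an infimum over all centers $q \in \mathbb{R}^n$, and such a $q$ need not lie in $f(X)$, so I cannot simply pull $q$ back through $f$. The device I would use is that the set $R_q = \{v \in C : f(v) \in B(q, \delta'\rho')\}$ of points deleted by the small $\mathbb{R}^n$-ball has small diameter in $X$: any two members $v_1, v_2$ satisfy $|f(v_1) - f(v_2)| < 2\delta'\rho'$, hence $d(v_1, v_2) < 2L^2 \delta' \rho_w = \delta\rho_w$. Thus, if $R_q$ is nonempty, choosing any $p \in R_q$ forces $R_q \subseteq B(p, \delta\rho_w) \cap C$. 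Applying the $X$-supporting hypothesis with this $p$, at least $s$ points of $C$ lie in $B(w, \rho_w/\delta) \setminus B(p, \delta\rho_w)$; by the previous paragraph their images lie in $B(f(w), \rho'/\delta')$, and being outside $B(p, \delta\rho_w) \supseteq R_q$ their images avoid $B(q, \delta'\rho')$. These $\ge s$ images are distinct, so the count for $q$ is at least $s$. When $R_q = \emptyset$ the small ball deletes nothing, and the $X$-hypothesis (with any center) already forces $\ge s$ points in $B(w, \rho_w/\delta)$, whose images populate $B(f(w), \rho'/\delta')$. Hence the bound $\ge s$ holds for every $q$, proving $f(w)$ is $(\delta', s)$-supported and completing the reduction.
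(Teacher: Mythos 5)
Your proof is correct and follows essentially the same route as the paper: transfer the $(\delta,s)$-supported property through $f$ and invoke Lemma A in $\mathbb{R}^n$. You are in fact more careful than the paper on one point --- the infimum in the definition ranges over all centers $q \in \mathbb{R}^n$, not just those in $f(X)$, and your device of replacing the small Euclidean ball $B(q,\delta'\rho')$ by an $X$-ball centered at some $p \in R_q$ (at the cost of the factor $2$ in $\delta' = \delta/(2L^2)$) correctly handles this, whereas the paper's choice $\delta/L^2$ implicitly only checks centers lying in the image of $f$.
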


\begin{proof}
Let us denote the bi-Lipschitz embedding by $f$ and suppose it has Lipschitz constant $L \geq 1$. Suppose that $w \in C$ is a $(\delta, s)$-supported point in $(X,d)$, that is
\[ \inf_{p \in X} \left| \left\{ \left( B(w, \rho_w / \delta) \setminus B(p, \delta \rho_w) \right) \cap C \right\} \right| \geq s. \]
With $w' = f(w)$ and $\rho_{w'}$ denoting the isolation radius of $f(w)$ with respect to $f(C)$ in $\mathbb{R}^n$ with the Euclidean metric. Note that $ \rho_w / L \leq \rho_{w'} \leq L \rho_w$.  So 
\[  f(B(w, \rho_w / \delta)) \subseteq B(w', L^2 \rho_{w'} / \delta) \]
and for any $p \in X$,
\[ f(B(p, \delta \rho_w)) \supseteq B(p, \delta \rho_{w'} / L^2). \]
So $w'$ is $(\delta/L^2, s)$-supported in $\mathbb{R}^n$.  As $\mathbb{R}^n$ carries a Benjamini-Schramm lemma with constant $c(\delta)$, it follows that $X$ carries a Benjamini-Schramm lemma with constant $c(\delta/L^2)$.
\end{proof}

Suppose $(X,d)$ is a metric space and $0 < \epsilon < 1$.  Then we may consider a new distance on $X$ by taking $d^\epsilon (x, y) := (d(x,y))^\epsilon$.  This new metric space $(X, d^\epsilon)$, often called a {\em snowflaked version}
of $(X,d)$, is not in general bi-Lipschitz equivalent to the original space.  Note that if $(X,d)$ has any rectifiable curves, then $(X,d^\epsilon)$ does not.  However, this new metric is still useful due to the following theorem.

\vspace{2mm}\noindent{\bf Theorem B.} (P. Assouad \cite{A})
\textit{Let $(X,d)$ be a doubling metric space and $0 < \epsilon < 1$.  Then $(X, d^\epsilon)$ admits a bi-Lipschitz embedding into $\mathbb{R}^n$, quantitatively.}\\

By quantitatively, we mean that the choice of $n$ and the Lipschitz constant of the embeddding both depend on $\epsilon$, however a refinement of this theorem due to A. Naor and O. Neimann remarkably removed the dependence of $n$ on  $\epsilon$ \cite{NN}.

Noting that Theorem B and Proposition \ref{bleq} imply that every doubling metric space can be snowflaked to become a space which carries a Benjamini-Schramm lemma, the sufficiency part of Theorem \ref{mt} will immediately follow from the proposition below.

\begin{proposition} \label{snbsl}
Suppose for a metric space $(X,d)$ and an $0<\epsilon < 1$ that $(X, d^\epsilon)$ carries a Benjamini-Schramm lemma.  Then $(X,d)$ carries a Benjamini-Schramm lemma.
\end{proposition}

\begin{proof}
Let $C$ be a finite set in $(X,d)$ and $\rho_{w,\epsilon}$ be the isolation radius of $w \in C$ with respect to the metric $d^\epsilon$.  Then
\[ \rho_{w,\epsilon} = \rho_w \hspace{.5mm}^\epsilon. \]
Letting $B^\epsilon$ stand for balls in the $d^\epsilon$ metric and $B$ for the $d$ metric, once we see that
\begin{eqnarray*}
 B^\epsilon (w, \rho_{w,\epsilon} / \delta) & = & \{ x : d^\epsilon(x,w) <\rho_{w,\epsilon} / \delta \}\\
& = & \{x : d(x,w) < (\rho_{w,\epsilon})^{1/\epsilon}/ \delta^{1/\epsilon} \}\\
& = &B(w, \rho_w / \delta^{1/\epsilon})
\end{eqnarray*}
and similarly for any $p \in X$
\[ B^\epsilon (p, \rho_{w,\epsilon} \cdot \delta) = B(p, \rho_w \cdot \delta^{1/\epsilon}) \]
and the proposition follows as in the proof of Proposition \ref{bleq}.
\end{proof}

Now we address the necessity in Theorem \ref{mt}.

\begin{proposition} \label{conv}
Suppose that the metric space $(X,d)$ is not doubling, then the space does not carry a Benjamini-Schramm lemma.
\end{proposition}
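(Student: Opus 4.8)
The plan is to argue directly by contraposition: from a failure of the doubling condition I will manufacture, for one fixed value of \(\delta\), finite sets in which \emph{every} point is \((\delta,s)\)-supported with \(s\) as large as I please, so that no constant \(c\) can satisfy the required bound. Since \emph{carrying} a Benjamini--Schramm lemma demands a valid \(c=c(\delta)\) for \emph{every} \(\delta\in(0,1)\), it suffices to defeat a single \(\delta\); I will take \(\delta=1/16\).

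First I would recast non-doubling as a packing statement. Using the (equivalent, and here sanctioned) ball-covering form of the definition, failure of doubling means that for every \(M\in\mathbb N\) there are \(x\in X\) and \(r>0\) such that \(B(x,r)\) cannot be covered by \(M\) balls of radius \(r/2\). Passing to a maximal subset of \(B(x,r)\) whose points are pairwise at distance at least \(r/2\), maximality forces the radius-\(r/2\) balls about these points to cover \(B(x,r)\); hence such a separated set must contain more than \(M\) points \(p_1,\dots,p_N\) (if it is infinite, simply select \(N\) of them). Thus non-doubling yields, for each \(M\), a configuration of \(N>M\) points inside a ball of radius \(r\) with all pairwise distances in \([r/2,2r)\).

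I would then take \(C=\{p_1,\dots,p_N\}\) itself as the test set and claim every \(p_i\) is \((\delta,N-1)\)-supported with \(\delta=1/16\). Each isolation radius satisfies \(\rho_{p_i}\in[r/2,2r)\), so \(\rho_{p_i}/\delta\ge 8r\) exceeds the diameter of \(C\); therefore \(B(p_i,\rho_{p_i}/\delta)\supseteq C\), contributing all \(N\) points. On the other hand \(\delta\rho_{p_i}<r/8\), so any ball \(B(p,\delta\rho_{p_i})\) has diameter below \(r/2\) and can contain at most one of the \(r/2\)-separated points \(p_j\); removing it leaves at least \(N-1\) points of \(C\), uniformly in \(p\in X\). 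This is exactly the \((\delta,N-1)\)-support.

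Finally I would read off the contradiction. With \(s=N-1\ge 2\) the set of \((\delta,s)\)-supported points is all of \(C\), so its cardinality is \(N=|C|\), whereas a Benjamini--Schramm bound with constant \(c\) would require \(N<c\,|C|/s=cN/(N-1)\), i.e.\ \(N-1<c\). Letting \(M\to\infty\) makes \(N\) unbounded, so no finite \(c\) works for \(\delta=1/16\), and \(X\) carries no Benjamini--Schramm lemma. The one genuinely substantive step is the first: extracting from bare non-doubling a family of balls containing arbitrarily many points separated by a fixed fraction of the radius. After that, the observation that a plain separated set already forces every point to be supported with \(s\) comparable to the cardinality---so that \((\#\text{supported})\cdot s/|C|\to\infty\)---is a short direct computation, and is really the conceptual heart of the argument.
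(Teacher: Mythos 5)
Your proof is correct and follows essentially the same strategy as the paper: fix one small $\delta$, extract from the failure of doubling an arbitrarily large set $C$ of points with mutual separation a fixed fraction of its diameter, and observe that then every point of $C$ is $(\delta,|C|-1)$-supported because the ball $B(w,\rho_w/\delta)$ swallows all of $C$ while any ball of radius $\delta\rho_w$ is too small to contain two of the separated points, which defeats any bound of the form $c|C|/s$. The only (harmless) divergence is in how the separated set is produced --- you take a maximal $r/2$-separated net in a non-doubling ball, whereas the paper applies the basic $5r$-covering theorem to a non-doubling set $A_N$ and uses the centers of the resulting disjoint subfamily; both yield the same configuration and the same computation.
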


\begin{proof}
Let $(X,d)$ be a metric space which is not doubling.  Hence for each $N \in \mathbb{N}$ there exists an $A_N \subset X$ where $\diam A_N = d_n$ needs at least $N$ balls of radius $d_N/2$ to cover $A$.  Consider a cover $\mathcal{G}$ of $A_N$ by balls of radius $d_N/10$.  By a basic covering theorem (see \cite{H} Theorem 1.2) there is a subcover of $\mathcal{G}$, denoted $\mathcal{G'}$, which is disjoint and $5\mathcal{G'}$ covers $A_N$.  By the assumption on $A_N$, the number of balls in $\mathcal{G'}$ is at least $N$.  Let $x_k$ be the centers of the balls in $\mathcal{G}'$.  By the disjointness of $\mathcal{G}'$, $d(x_j, x_k) \geq d_N/5$ if $j \not= k$.  We now set $C$ to be centers of the balls of $\mathcal{G}'$. Let $\delta = 1/20$.  For each $x_j$
\[ B(x_j , \rho_{x_j}/\delta) \supseteq B(x_j, (d_N/5)/\delta) = B(x_j, 4d_n) \]
We claim that $B(x_j, \rho_{x_j}/\delta)$ contains all of the centers $x_k$ of the balls from the $\mathcal{G}'$.  We may assume that for $B \in \mathcal{G}'$, the ball $5B$ intersects the set $A_N$ (as $\mathcal{G}'$ was chosen so the $5B$ cover $A_N$).  As the $\diam A_N = d_N$, a ball of radius $4 d_N$ centered at $x_j$ contains  all the  points $x_k$ (actually only a ball radius of $2 d_N$ is needed).  Now consider the quantity $\rho_{x_j}  \delta$.  As the points $x_j$ are of mutual pairwise distance at least $d_N/5$ apart, if $\rho_{x_j}  \delta$ is less than $d_N/10$, then for any $p \in X$ the ball $B(p, \rho_{x_j}\delta)$ may only contain one of the $x_k$'s.    But $\rho_{x_j} \cdot \delta$ is less than $d_N / 10 $ if and only if
\[ \rho_{x_j} < 2d_N. \]
This is true because, as already noted, the ball $B(x_j, 2 d_N)$ contains all the points $x_k$. Hence each point of the set $C$ consisting of the centers of the balls of $\mathcal{G}'$ is $(1/20, N-1)$-supported.  No metric space carrying a Benjamini-Schramm lemma may contain such sets for each $N \in \mathbb{N}$.
\end{proof}

Taken together, Propositions \ref{bleq}, \ref{snbsl}, and \ref{conv} prove Theorem \ref{mt}.


 \end{document}